\newcommand{\comment}[1]{}
\newtheorem{theorem}{Theorem}[section]
\newtheorem{definition}{Definition}[section]
\newtheorem{lemma}{Lemma}[section]
\numberwithin{equation}{section}
\def\R{\hbox{\bf R}}
\def\N{\hbox{\bf N}}
\def\B {\hbox{\bf B}}
\begin{document}

\begin{frontmatter}



\title{Computational Topology: Isotopic Convergence to a Stick Knot}


\author[label1]{J. Li}
\address[label1]{University of Connecticut, Storrs, CT 06269, U.S.A.}

\author[label1]{T. J. Peters\footnote{This author (and the 1st author) was partially supported by NSF grant CNS 0923158 and by IBM Faculty Award 2013 -- 6327140.  All statements here are the responsibility of the author, not of the National Science Foundation nor of IBM.} }

\author[label2]{K. E. Jordan}
\address[label2]{IBM T.J. Watson Research Center, Cambridge, MA, U.S.A.}

\author[label3]{P.  Zaffetti}
\address[label3]{iDevices Inc., Avon, CT 06001, U.S.A.}

\begin{abstract}
Computational topology is a vibrant contemporary subfield and this article integrates knot theory and mathematical visualization.  Previous work on computer graphics developed a sequence of smooth knots that were shown to converge point wise to a piecewise linear (PL) approximant.  This is extended to isotopic convergence, with that discovery aided by computational experiments.  Sufficient conditions to attain isotopic equivalence can be determined {\it a priori}.  These sufficient conditions need not be tight bounds, providing opportunities for further optimizations.  The results presented will facilitate further computational experiments on the theory of PL knots (also known as stick knots), where this theory is less mature than for smooth knots.   

\end{abstract}

\begin{keyword}
Knot \sep isotopy \sep computer animation \sep molecular simulation



\end{keyword}

\end{frontmatter}





\section{Introduction \& Related Work}


For a positive integer $n$,  a B\'ezier curve \cite{Piegl}of degree $n$  is defined as $\B(t)$,  with control points $P_i \in\mathbb{R}^3$ by
$$\B(t) = \sum_{i=0}^{n} \binom{n}{i} t^i (1-t)^{n-i} P_i, \hspace{2ex} t \in [0,1].$$
The curve $\mathcal{P}$ formed by PL interpolation on the ordered set of points $\{P_0,P_1,\ldots,P_n\}$ is called the {\em control polygon}.  This $\mathcal{P}$  is a PL approximation of $\B$.

The curves considered here will be closed by understanding that $P_0 = P_n$.  Furthermore, it is assumed that both the B\'ezier curves and their control polygons are simple.  The focus here is on the isotopic equivalence between a knotted B\'ezier curve and its PL approximation.   These knotted PL approximations are also known as `stick knots' \cite{Colin}.

\pagebreak

A contemporary treatment of knots and molecules  \cite{WeMi06} provided motivation for much of this work.  In particular, remarks on the ``... humbling ...'' status in theoretical understanding of stick knots versus smooth knots stimulated the consideration, here, of B\'ezier curves that are isotopically equivalent to stick knots.
 
The term `molecular movies' was introduced \cite{MMovies} to include the visualization of molecular simulations.  There is a specific cautionary example  \cite{LiPeMaKoJo15} about introducing topological artifacts into a  molecular movie.  The relevant knots were $4_1$ and the unknot, represented in sufficiently simple format that isotopic equivalences were easily determined by standard methods \cite{Livingston1993}.  Knot visualization software \cite{TJPweb} was used to develop that illustrative example.

Those application specific considerations \cite{LiPeMaKoJo15} led to the generalizations presented here.  The theorem presented here generalizes that example to sufficient conditions for convergence between B\'ezier curves and their isotopic PL approximations.  An \emph{a priori} bound is given on the number of iterations needed to obtain an isotopic approximation.

The preservation of topological characteristics in computational applications is of contemporary interest \cite{Amenta2003, L.-E.Andersson2000, Andersson1998, Chazal2005,Chazal2011, DenneSullivan2008, ENSTPPKS12, bez-iso,TJP08,KiSi08, JL-isoconvthm,Maekawa_Patrikalakis_Sakkalis_Yu1998}.  The isotopy theorems on knots of finite total curvature are foundational to the work presented here.  Sufficient conditions for a homeomorphism between a B\'ezier curve and its control polygon have been studied \cite{M.Neagu_E.Calcoen_B.Lacolle2000}, while topological differences have also been shown \cite{Bisceglio, JL2012, Piegl}.   Sufficient conditions were given to insure that perturbations of the control points maintain isotopic equivalence of the perturbed splines  \cite{andersson2000equivalence}.   There is an example of a PL structure that becomes self-intersecting while the associated B\'ezier curve remains simple \cite{CPIJ2014}.  Recent perspectives on computational topology have appeared
\cite{GC09, EdHa10, Zo05}.

\section{Inserting Midpoints as Control Points}
\label{sec:dr}

The fundamental approximation technique introduced here is dual to many others.  The more typical focus is to produce a sequence of PL curves that approximates a given smooth curve.  Indeed, these authors have previously published such results \cite{JoLiPeRo14}.    The duality here is to create a sequence of smooth knots that converge to a PL knot, achieving isotopic equivalence within the sequence.   The use of smoothness here is understood to be $C^{\infty}$, with a possible exception at the point $\B(0) = \B(1)$.   This technique has been called \emph{collinear insertion} \cite{LiPeMaKoJo15}, where an example was presented with 8 initial control points (inclusive of equality of the initial and final control points).  That control polygon was the knot $4_1$ and its associated B\'ezier curve was the unknot.  With 4 iterations of collinear insertion, the stick and smooth knots were isotopic.

\pagebreak

\begin{definition}
Consider the control polygon $\mathcal{P}$.  To avoid trivial cases, it is assumed that, for $i > 0$,
\begin{itemize}
\vspace{-0.09in}
\item $P_i \neq P_i*$ for any $i \neq i*$, 
\vspace{-0.09in}
\item only two such $P_i \neq P_i*$ can be collinear and 
\vspace{-0.09in}
\item $n \geq 4$.
\vspace{-0.09in}
\end{itemize}
\end{definition}

Sequences of control polygons and B\'ezier curves will be generated by letting $\mathcal{P}^{(0)} = \mathcal{P}$ and generating $\mathcal{P}^{(1)}$ from $\mathcal{P}^{(0)}$ by adding the midpoint of each edge of $\mathcal{P}^{(0)}$.  For $j \geq 0$, similarly generate $\mathcal{P}^{(j+1)}$ by the insertion of midpoints between all of the control points of $\mathcal{P}^{(j)}$.   For each $j$, the corresponding B\'ezier curve will be denoted as $\B^{(j)}$.  Note that all $\mathcal{P}^{(j)}$'s are isotopic under the trivial identification map.

\section{Convergence Theorem}
\label{sec:convthm}

The primary convergence result relies upon a previously published theorem \cite[Theorem 4.2]{DenneSullivan2008} on rectifiable \cite{rect} graphs of finite total curvature \cite{Milnor1950}.   This central theorem is quoted, below, after definition of a key notion of closeness.

\begin{definition}
Given two rectifiable embeddings $\Gamma_1$  and $\Gamma_2$ of the same combinatorial graph,
we say they are $(\delta, \theta)-close$ if there exists a homeomorphism between them such that corresponding points are within distance $\delta$ of each other, and corresponding tangent vectors
are within angle $\theta$ of each other almost everywhere.
\end{definition}

\begin{theorem}
\cite[Theorem 4.2] {DenneSullivan2008} Suppose $\Gamma_1$ is a knotted graph of finite total curvature and 
$\epsilon > 0$  is given. Then there exists $\delta > 0$ such that any (rectifiable) graph $\Gamma_2$ which is $(\delta, \pi/8)-close$ to $\Gamma_1$ is ambient isotopic to $\Gamma_1$, via an isotopy which moves no point by more than $\epsilon$.
\label{thm:delta-theta-iso}
\end{theorem}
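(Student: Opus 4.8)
The plan is to realize $\Gamma_2$ as a section of a normal tube around $\Gamma_1$ and then to slide that section onto $\Gamma_1$ along the tube's fibers. First I would extract from the finite total curvature hypothesis a positive geometric scale for $\Gamma_1$. A rectifiable graph of finite total curvature has a well-defined tangent direction at all but finitely many vertices together with an at-most-countable, curvature-summable set of corners, and away from these the arc admits a genuine normal disk bundle. I would define the \emph{normal injectivity radius} $r>0$ as the supremum of radii for which the union of normal disks is embedded and the nearest-point projection $p:N_r(\Gamma_1)\to\Gamma_1$ is single-valued; embeddedness together with the finite total curvature bound (which limits how quickly the curve can return near itself) forces $r>0$. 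Then I would set $\delta$ comfortably smaller than $\min\{r,\epsilon\}$, with the precise ratio dictated by the angle bound below.

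Second, I would use the $(\delta,\pi/8)$-closeness to show that $\Gamma_2\subset N_r(\Gamma_1)$ and that the restricted projection $p|_{\Gamma_2}:\Gamma_2\to\Gamma_1$ is a homeomorphism. Containment is immediate from the distance bound $\delta<r$. The homeomorphism claim is where the angle hypothesis does its work: because corresponding tangents differ by at most $\pi/8<\pi/2$ almost everywhere, $\Gamma_2$ is transverse to each normal disk and never doubles back within a fiber, so each normal disk meets $\Gamma_2$ in exactly one point. This makes $p|_{\Gamma_2}$ a continuous bijection between compact spaces, hence a homeomorphism, and exhibits $\Gamma_2$ as a genuine normal graph over $\Gamma_1$.

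Third, with $\Gamma_2$ a normal graph I would build the ambient isotopy explicitly. On each fiber I push the single point of $\Gamma_2$ radially to the center $\Gamma_1$, interpolate this motion across the disk with a cutoff that is the identity near the tube boundary, and take the identity outside $N_r(\Gamma_1)$; fitting these fiberwise maps together over the parameter $t\in[0,1]$ gives a level-preserving homeomorphism of $\mathbb{R}^3\times[0,1]$ carrying $\Gamma_2$ to $\Gamma_1$. Since every point moves only within its own normal disk of radius less than $\delta<\epsilon$, no point is displaced by more than $\epsilon$, yielding the stated bound.

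The hard part will be the local analysis at the finitely many non-smooth points of $\Gamma_1$ --- the graph vertices and the curvature-concentration corners --- where the normal disk bundle degenerates and the clean radial push is undefined. There I would replace the tube by a small metric ball, use the finite-total-curvature structure theorem to control the tangent cone, and patch the local isotopy to the fiberwise construction on the adjacent smooth arcs, checking compatibility on the overlaps. Guaranteeing that these patched pieces assemble into a single continuous ambient isotopy, and that the projection remains injective across the vertex neighborhoods, is the delicate step that the finite-total-curvature machinery is designed to handle.
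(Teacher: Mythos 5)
Your proposal has a genuine gap, and it sits exactly where this theorem is hardest. The normal tube constructed in your first step does not exist for the objects the theorem is about. A graph of finite total curvature need not be $C^1$: it can have corners (indeed countably many, not ``finitely many non-smooth points'' as you assert), and the paper applies the theorem with $\Gamma_1$ equal to a PL stick knot, where \emph{every} vertex is a corner. At a corner where two edges meet with turning angle $\varphi > 0$, the normal disks of any fixed radius $r>0$ based at points on the two edges at distance $s$ from the vertex intersect one another as soon as $s < r\tan(\varphi/2)$, and the nearest-point projection is two-valued along the interior angle bisector arbitrarily close to the vertex. Hence the normal injectivity radius you define is $0$ whenever $\Gamma_1$ has even one corner, and the construction is meaningless at graph vertices of valence $\geq 3$, which the word ``graph'' in the statement allows. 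So the ``delicate step'' you defer to the end is not a local patch of an otherwise sound argument: the global object on which your second and third steps rest (an embedded tube of positive radius with single-valued projection) simply is not there in the cases of interest. What you have written is essentially the classical tubular-neighborhood proof for $C^2$ knots, and it cannot be adapted to finite total curvature graphs without being replaced wholesale.

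For comparison: the paper itself does not prove this statement (it is quoted from Denne and Sullivan), but the $\delta$-recipe reproduced in Section~\ref{ssec:theobounds} exposes the strategy of the cited proof, which is designed precisely to avoid tubes. One divides $\Gamma_1$ by finitely many points $p_j$ (including all vertices) into arcs $\alpha_k$ each of total curvature less than $\pi/8$; such arcs are nearly straight, and this quantitative straightness is what substitutes for smoothness. One then surrounds the $p_j$ by small disjoint balls, lets $r_3$ be the mutual separation of the truncated arcs $\beta_k$ outside the balls, and takes $\delta$ to be a definite fraction of $r_3$. The ambient isotopy is assembled piecewise: within the balls, and along each pair of corresponding nearly straight arcs, where $\delta$-closeness together with the $\pi/8$ tangent tolerance makes the interpolating motion embedded. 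Note that $\pi/8$ plays two roles there, as the curvature budget per arc and as the tangent tolerance; in your tube argument any angle below $\pi/2$ would have sufficed, which is a telltale sign that your argument is proving a different (smooth) theorem rather than this one.
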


The tangent vectors of the B\'ezier curves and their control polygons are well defined except at possibly finitely many points.  The homeomorphism between the knotted control polygon and knotted B\'ezier curve will be the natural one matching points defined at the same parametric values over $[0, 2\pi].$   The PL knots considered will have finite total curvature.   The result that every compact connected set which has finite length is rectifiable \cite{david1993analysis} is directly applicable to the curves considered.

There is pervasive interest in computer graphics in convergence properties between B\'ezier curves and their PL approximations \cite{Piegl}, motivating the general convergence considered here for collinear insertion.  There are supportive techniques already available for B\'ezier curves, as summarized briefly here.  

The derivative of $\B$ is a B\'ezier curve \cite{Farin}, with control points $n \ast \Delta P_i $, in terms of  the forward difference operator, denoted by  \[ \Delta P_i = P_{i  + 1}- P_i, \hspace{1ex} i = 0, \ldots, n - 1.\]  The second iterated forward difference operator is defined similarly as  
\[  \Delta_2 P_i = P_{i+2} - 2P_{i + 1} +  P_i.\]

Point wise convergence in distance under $j$ collinear insertions has been shown \cite{LiPeMaKoJo15}. Bounds will be shown on the rate of convergence for collinear insertion by adapting a previously published inequality \cite{Nairn-Peters-Lutterkort1999}.  Each control point of $\B$ is an ordered triple $(x,y,z) \in \R^3.$.  Let $\Delta_2 P^x$ denote the second iterated forward difference taken over the $x$ coordinate (with similarly definitions regarding the $y$ and $z$ coordinates).  Then, $\Delta_2 P^x$ is a vector having $n - 2$ entries and the usual 1-norm is indicated by $\| \Delta_2 P^x \|_1.$  Let $$\| \Delta_2 P \|_{1,M} = \max_{w \in \{x,y,z\}} \| \Delta_2 P^w \|_1.$$  

The notation \cite{Nairn-Peters-Lutterkort1999} of $ N_1(n)$ and a related combinatorial upper bound appears in the appendix of  Section~\ref{sec:suppcombo}. The indicated adaptation was previously implicit \cite{LiPeMaKoJo15}but is explicated as
\begin{equation}
\label{ineq:polydist} 
 \| \B(t) - \mathcal{P}(t)\|_{\infty, [0,1]} \leq N_1(n) \ast \| \Delta_{2}P \|_{1, M},
\end{equation}
leading directly to 
\begin{equation}
\label{ineq:drate}
 \| \B^{(j)}(t) - \mathcal{P}^{(j)}(t)\|_{\infty, [0,1]}  \leq  N_1(2^j n) \ast \| \Delta_2P^{(j)}\|_{1,M} \leq (n/(4 \sqrt{n \ast 2^j +1})) \| \Delta_{2}P \|_{1,M}.
\end{equation}

\subsection{Distance Bounds: Curve and Hodograph}
\label{ssec:distclose}
An upper bound on the distance between a hodograph and its control polygon is also determined directly from Inequality~\ref{ineq:polydist}.
Let ${\mathbf {\frac{d}{dt}}} \B$ denote the derivative of $\B$ and ${\mathbf {\frac{d}{dt}}} \B$ is a B\'ezier curve with control points of the form
\begin{equation}
 n \ast  (P_{i} - P_{i-1}),\hspace{2ex} i = 1, \ldots, n - 1.
\label{eq:delta2hod}
\end{equation}

Let $H_1, H_2, \ldots, H_{n -1}$ be the control points of ${\mathbf {\frac{d}{dt}}} \B$ and let $\mathcal{H}$ be the associated control polygon, leading to the inequality
\begin{equation}
\label{ineq:hoddist} 
 \| {\mathbf {\frac{d}{dt}}} \B(t) - \mathcal{H}(t)\|_{\infty, [0,1]} \leq N_1(n) \ast \| \Delta_{2}H \|_{1,M}.
\end{equation}
A slight refinement has been made on the right hand side of Inequality~\ref{ineq:hoddist} in the use of $N_1(n)$ in place of $N_1(n - 1)$, as $N_1$ is a `` ... slowly growing function ...'' \cite{Nairn-Peters-Lutterkort1999}.  The advantage of this minor replacement is to continue to evaluate $N_1$ on $n$ as an even integer, where that convenience is justified in Section~\ref{sec:suppcombo}.  While this increases the upper bound, the increase is not substantive for theoretical convergence questions, though a tighter upper bound may be valuable to consider on specific data.

For each $j \in \N \cup \{0\}$, let $\mathcal{H}^{(j)}$ denote the control polygon for $\mathbf {\frac{d}{dt}} \B^{(j)}(t).$ By definition, 
\[\Delta_2 H_{i + 2} = n \ast  ((P_{i + 2} - P_{i+1}) - 2 (P_{i + 1} - P_{i}) + (P_{i + 2} - P_{i-1})) ,\] which simplifies to 
\begin{equation}
\label{eq:4pts}
\Delta_2 H_{i +2}= n \ast (P_{i + 2} - 3 P_{i+1} + 3 P_{i} - P_{i-1}).
\end{equation}

\begin{lemma}
\label{lem:collapse-delta2}
For each $j \in \N \cup \{0\}, \hspace{1ex} \| \Delta_2  H^{(j)} \|_{1,M} \leq  2^{j - 1} \| \Delta_2 H \|_{1,M}. $
\end{lemma}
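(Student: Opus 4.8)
The plan is to argue by induction on $j$, reducing the statement to the effect of a single midpoint insertion and then iterating. The base case $j = 0$ is the identity $\mathcal{H}^{(0)} = \mathcal{H}$, so only the inductive step $j \to j+1$ carries content: it suffices to understand how one round of midpoint insertion transforms the control points of ${\mathbf {\frac{d}{dt}}} \B^{(j)}$.

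First I would make the insertion step explicit at the level of the hodograph. By Equation~\ref{eq:delta2hod} the control points of ${\mathbf {\frac{d}{dt}}} \B^{(j)}$ are the degree-scaled first differences of $\mathcal{P}^{(j)}$. Inserting midpoints replaces each edge of $\mathcal{P}^{(j)}$ by two half-length edges, so every first difference $\Delta P^{(j)}_i$ is halved and repeated, while the degree of $\B^{(j)}$ doubles from $2^{j}n$ to $2^{j+1}n$. These two factors cancel at the level of the hodograph control-point values, and the net effect is clean: the control polygon $\mathcal{H}^{(j+1)}$ consists of the control points of $\mathcal{H}^{(j)}$ with each one repeated twice. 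Computing $\Delta_2$ of such a value-doubled sequence, the entries of $\Delta_2 H^{(j+1)}$ are exactly $\pm(H^{(j)}_{i+1} - H^{(j)}_i)$, the first differences of $H^{(j)}$ with alternating sign. Passing to the $\| \cdot \|_{1,M}$ norm then converts a second-difference quantity at level $j+1$ into a first-difference quantity at level $j$, and this is the identity I would iterate.

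The remaining work is to feed this relation into the induction so that the estimate closes at $2^{j-1} \| \Delta_2 H \|_{1,M}$. Here I would invoke Equation~\ref{eq:4pts}, which rewrites $\Delta_2 H_{i+2}$ as the degree times the four-point third-difference stencil $P_{i+2} - 3P_{i+1} + 3P_i - P_{i-1}$, so as to keep the bound anchored to $\Delta_2 H$ rather than to the auxiliary first differences generated by the doubling step, applying the triangle inequality coordinatewise before taking the maximum over $w \in \{x,y,z\}$. I expect the main obstacle to be pinning down the exact per-step constant: the degree doubling contributes a factor of $2$ while midpoint insertion contracts the local differences by $\tfrac{1}{2}$, and the target factor $2^{j-1}$ rests on balancing these competing effects precisely while moving between consecutive orders of differences of the hodograph. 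The secondary technical points — the closed-curve wrap-around in the index range of Equation~\ref{eq:delta2hod}, and the finitely many parameter values at which the tangent is undefined — I would dispatch at the end, since neither disturbs the $1$-norm bookkeeping.
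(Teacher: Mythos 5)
Your one-step analysis of collinear insertion is correct, and it is actually more careful than the paper's own: since $\B^{(j+1)}$ has degree $2^{j+1}n$, the halving of each first difference under midpoint insertion is exactly cancelled by the doubling of the degree factor, so $\mathcal{H}^{(j+1)}$ is $\mathcal{H}^{(j)}$ with every control point repeated twice, and the entries of $\Delta_2 H^{(j+1)}$ are the signed first differences of $H^{(j)}$, each occurring twice. But your proposal stops exactly where the lemma begins. Writing $\Delta H$ for the vector of first differences $H_{i+1}-H_i$, the identity you derive is $\|\Delta_2 H^{(j+1)}\|_{1,M} = 2\,\|\Delta H^{(j)}\|_{1,M}$; it lowers the order of differences, so it cannot be iterated on itself. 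Its natural companion fact (repeating values preserves total variation, so $\|\Delta H^{(j)}\|_{1,M} = \|\Delta H\|_{1,M}$) closes the recursion at $\|\Delta_2 H^{(j)}\|_{1,M} = 2\,\|\Delta H\|_{1,M}$ for all $j \geq 1$ --- a quantity that is \emph{constant} in $j$ and is anchored to first differences of $H$, not second differences. To reach the stated right-hand side you would still need $\|\Delta H\|_{1,M} \leq 2^{j-2}\,\|\Delta_2 H\|_{1,M}$, and no such inequality holds uniformly: for a closed polygon the first differences of the edge sequence are not controlled by the second differences (for a nearly regular $n$-gon the ratio of the two $1$-norms grows like $n$), so this step fails outright for small $j$ and large $n$. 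Invoking Equation~\ref{eq:4pts} does not supply it, since that equation only re-expresses $\Delta_2 H$ through third differences of $\mathcal{P}$; it gives no comparison between orders of differences. Note also that your base case is not the triviality you claim: with the printed factor $2^{j-1}$, the case $j=0$ asserts $\|\Delta_2 H\|_{1,M} \leq \tfrac{1}{2}\|\Delta_2 H\|_{1,M}$, false unless $\Delta_2 H = 0$. In short, the ``per-step constant'' you defer to the end is the entire content of the lemma, and your own (correct) bookkeeping shows the two competing factors of $2$ cancel rather than compound to $2^{j-1}$.

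For comparison, the paper does not induct at all: it classifies the four-point stencils of Equation~\ref{eq:4pts} evaluated on $\mathcal{P}^{(j)}$ --- all four points collinear, giving a zero entry, versus exactly three collinear, giving a nonzero entry of norm $2^{-j} n \|P_{\ell}-P_{\ell-1}\|$ that occurs twice per original vertex --- and sums, concluding $\|\Delta_2 \mathcal{H}^{(j)}\| = 2^{-(j-1)}\|\Delta_2 \mathcal{H}\|$. The decisive divergence from your bookkeeping is that this count keeps the degree factor fixed at $n$, whereas you double it at every insertion (as one must if $\mathcal{H}^{(j)}$ is literally the control polygon of $\frac{d}{dt}\B^{(j)}$, whose degree is $2^j n$); that single discrepancy is why the paper obtains stencil entries decaying like $2^{-j}$ while you obtain entries of constant size, and it sits alongside the visible mismatch between the lemma's stated factor $2^{j-1}$ and the factor $2^{-(j-1)}$ that the paper's proof concludes and that Inequality~\ref{ineq:hod-rate} actually uses. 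So your route is not an alternative proof of the same fact: carried honestly to completion it produces a conclusion incompatible with the paper's, which means the normalization of $H^{(j)}$ and the sign of the exponent in Lemma~\ref{lem:collapse-delta2} would have to be settled before either argument can be regarded as complete.
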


\begin{proof}
For each $j$, there are two important cases to consider for $\Delta_2 H^{(j)}$. 
Recall that the distance between successive control points of $ P^{(j)}$,  is equal $2^{-j} \| P_{\ell} - P_{\ell-1}\|$, for $\ell = 1, \ldots n $. 

\emph{Case 1}: The four points used in Equation~\ref{eq:4pts} are collinear and $\Delta_2 H_i^{(j)}$ is zero over all coordinates $x, y,z$.

\emph{Case 2}: Three of the four points in Equation~\ref{eq:4pts} are collinear.  If the first three points are collinear, then $\| \Delta_2 H_i^{(j)}\| = 2^{-j} \ast n  \|P_{\ell} - P_{\ell - 1}\|$  for some $\ell = 1, \ldots n.$   If the last three points are collinear, then the same value is obtained.  Hence, in evaluation of the 1-norm for the vector $\Delta_2 \mathcal{H}^{(j)}$, this term occurs twice for each $\ell$.    

The conclusion from these two cases is that 
\[ \| \Delta_2 \mathcal{H}^{(j)} \| = 2^{-(j - 1)} \| \Delta_2 \mathcal{H} \|. \hspace*{5ex} \Box  \]
\end{proof}

Applying  Lemma~\ref{lem:collapse-delta2} and Inequality~\ref{ineq:drate} to the hodograph yields

\begin{equation}
\label{ineq:hod-rate}
\|\mathbf {\frac{d}{dt}}  \B^{(j)}(t) - \mathcal{H}^{(j)}(t)\|_{\infty, [0,1]}  \leq (n/(2 \sqrt{n \ast 2^j + 1})) \ast 2^{-(j-1)} \ast \| \Delta_{2}H \|_{1,M}.
\end{equation}

\subsection{Bounding Angles Between Tangent Vectors}
\label{ssec:angtangents}

Inequality~\ref{ineq:hod-rate} provides a bound on distance between each point on the hodograph, ${\frac{d}{dt}}  \B^{(j)}(t)$ and the corresponding point
$\mathcal{H}^{(j)}(t)$ on the control polygon, which suffices as a bound between corresponding tangent vectors.

For each non-negative integer $j$, there are $2^j n + 1$ control points of $\mathcal{P}^{(j)}$.  Let $k = 0, \ldots, 2^j n$ and define a uniform parametrization of $[0,1]$ with $\mathcal{P}^{(j)}(k/(2^j n))$ being the $k$-th control point of $\mathcal{P}^{(j)}$ and other points on the control polygon determined by linear interpolation.  Closed parametric subintervals are of the form $[(k/(2^j n ), ((k + 1)/(2^j n)].$  The presence of consecutive collinear control points has implications for the discrete derivative.

In Figure~\ref{fig:shortside}, the triangle depicts the subtraction of any two vectors, where upper case letters denote angles, while lower case letters denote edges.   Consider {\bf A} relative to an upper bound on $\|{\bf a} \|$, where the direction of the vector corresponding to edge $\|{\bf a} \|$ is unknown.  The dotted circle indicates the positions that this edge could assume.  By symmetry, only the semi-circle above the horizontal axis needs to be considered.  When $\| {\bf a}\| < \|{\bf b}\|$, angle {\bf A} attains its maximum when angle {\bf C} is $\pi/2$, by elementary trigonometry.
\begin{figure}[h]
\centering{
    \scalebox{1.5}{\includegraphics[height=3.0in]{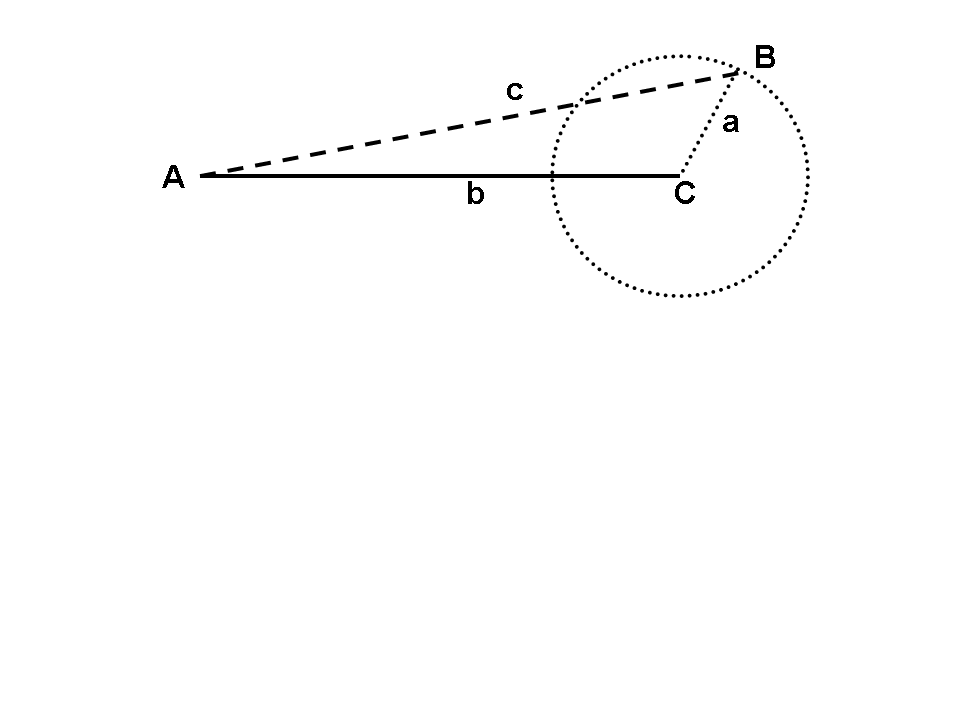}}}
\vspace*{-38ex}
\caption{Upper Bound on Angle}
 \label{fig:shortside}
\end{figure}
\begin{lemma}
\label{lem:maxangle}
The maximum angle between tangent vectors of $\B^{(j)}$ and  $\mathcal{P}$ 
 can be determined over the shortest edge of  $\mathcal{P}$.
\end{lemma}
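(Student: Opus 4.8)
The plan is to localize the tangent comparison to a single edge of $\mathcal{P}$ and then to exhibit the resulting angle bound as a strictly decreasing function of the edge length, so that its maximum over all edges is forced onto the shortest one. First I would record that the tangent direction of $\mathcal{P}$ along its $k$-th edge is constant and equal to the direction of the hodograph control point $H_k = n(P_k - P_{k-1})$ from Equation~\ref{eq:delta2hod}, so that $\|H_k\|$ is proportional to the length $\|P_k - P_{k-1}\|$ of that edge. Next I would invoke the subdivision structure: under collinear insertion the $2^j$ sub-edges that replace the $k$-th edge of $\mathcal{P}^{(j)}$ are collinear and of equal length, so their hodograph control points are all equal to $H_k$, and hence the matching point $\mathcal{H}^{(j)}(t)$ on the hodograph control polygon coincides with the fixed vector $H_k$ throughout the interior of that edge (the lone transition sub-interval straddling each junction is deferred to the end).

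With $\mathcal{H}^{(j)}(t) = H_k$ on the edge interior, I would write the tangent of $\B^{(j)}$ as $\frac{d}{dt}\B^{(j)}(t) = H_k + \mathbf{a}$, where the error vector $\mathbf{a} = \frac{d}{dt}\B^{(j)}(t) - \mathcal{H}^{(j)}(t)$ has magnitude bounded uniformly in $k$ by the right-hand side of Inequality~\ref{ineq:hod-rate}; denote this common bound by $\rho_j$, and note $\rho_j \to 0$ as $j \to \infty$ by Lemma~\ref{lem:collapse-delta2}. Since only $\|\mathbf{a}\| \le \rho_j$ is known while the direction of $\mathbf{a}$ is free, this is precisely the configuration of Figure~\ref{fig:shortside}, with the fixed vector playing the role of the edge $\mathbf{b}$ of length $\|H_k\|$. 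Provided $\rho_j < \|H_k\|$, the tangency configuration of that figure is the maximizer and the elementary trigonometry there yields the per-edge bound $\sin(\text{maximum angle}) \le \rho_j/\|H_k\|$ on the angle between $\frac{d}{dt}\B^{(j)}(t)$ and the edge direction.

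The argument then closes on monotonicity. Because $\rho_j$ is identical for every edge while $\|H_k\| = n\|P_k - P_{k-1}\|$ increases with edge length, the bound $\arcsin(\rho_j/\|H_k\|)$ is strictly decreasing in $\|P_k - P_{k-1}\|$. Consequently the largest admissible tangent deviation over all edges is attained where $\|P_k - P_{k-1}\|$ is least, that is, over the shortest edge of $\mathcal{P}$; checking the bound there certifies it on every edge, which is exactly what is needed to meet the $\pi/8$ threshold of Theorem~\ref{thm:delta-theta-iso}.

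I expect the main obstacle to lie not in the edge interiors but at the junctions between consecutive edges, where the identification $\mathcal{H}^{(j)}(t) = H_k$ fails: across a vertex $P_k$ the hodograph control points jump from $H_k$ to $H_{k+1}$, so on the single transition sub-interval the direction of $\mathcal{H}^{(j)}(t)$, and hence of the smoothed B\'ezier tangent, swings between the two incident edge directions. I would handle this by restricting the per-edge estimate to the edge interiors and bounding the junction contribution separately, using that the transition sub-intervals have parametric width of order $2^{-j}$ and shrink as the turning of $\B^{(j)}$ concentrates at the vertices in the limit $\B^{(j)} \to \mathcal{P}$; making this rigorous, rather than the shortest-edge step itself, is the delicate part. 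The one hypothesis to verify with care is $\rho_j < \|H_k\|$ for the shortest edge, since it is exactly what makes $\arcsin$ well defined and the tangency configuration the genuine maximizer.
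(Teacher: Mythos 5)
Your proposal follows essentially the same route as the paper's proof: the triangle configuration of Figure~\ref{fig:shortside}, the uniform error bound from Inequality~\ref{ineq:hod-rate} playing the role of $\|\mathbf{a}\|$, the $\arcsin$ bound against the constant edge tangent $n(P_{i+1}-P_i)$ on edge interiors, the monotonicity forcing the worst case onto the shortest edge of length $\lambda$, and the same admissibility condition (error bound less than $n\lambda$) secured by taking $j$ large. The only difference is one of explicitness: the paper restricts attention to parametric intervals not containing any $i/n$ and never revisits the junction sub-intervals, whereas you flag them as the delicate remaining step --- added care on your part rather than a divergence in approach.
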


\begin{proof}
For each $t \in [0,1]$, consider $\mathbf {\frac{d}{dt}}  \B^{(j)}(t) -  \mathcal{H}^{(j)}(t)$,  with edge {\bf b} in Figure~\ref{fig:shortside} depicting the tangent of ${\mathcal P}$ that corresponds to $\mathbf {\frac{d}{dt}}  \B^{(j)}(t).$

If $t$ is in an interval $[(k/(2^j n ), ((k + 1)/(2^j n)]$ which does not contain some $i/n$ for any $i = 0, \ldots, n$, then {\bf b} $=   n \|P_{i+1} - P_i\|$, for some $i = 0, \ldots, n - 1.$ 
 In the notation of Figure~\ref{fig:shortside}, the maximum value for angle {\bf A} will be given by arcsin(${\mathbf a}/{\mathbf b}$).

Denote by $\lambda$ the minimum length over all edges from $\mathcal{P}$ and let {\bf b} be a tangent vector of the form $n \ast (P_ i - P_{i - 1})$, for some $i = 1, \ldots, n.$
The condition on the length of {\bf a} is met by choosing $j$ such that the right hand side of Inequality~\ref{ineq:hod-rate} is less than $n\lambda$.  \hspace*{5ex} $\Box$
\end{proof}


\section{An Example}
\label{sec:example}

Following from Lemma~\ref{lem:maxangle} and Inequality~\ref{ineq:hod-rate} the values to satisfy Theorem~\ref{thm:delta-theta-iso} can be determined with an integer $m_1$ sufficient to achieve the distance bound of $\delta$ and with an integer $m_2$ sufficient to achieve the angular bound of $\pi/8$.  Let $M = max \{m_1, m_2\}$, so that for all $j > M$, $\B^{(j)}$ is isotopic to $\mathcal{P}.$

A detailed example follows on the control polygon \cite{LiPeMaKoJo15}
\vspace*{-1ex}
{\small $$(1.3076,   -3.3320,   -2.5072), (-1.3841, 4.6826, 0.9135), (-3.2983, -4.0567, 2.6862),$$ 
$$(-0.1233, 2.7683, -2.4636), (3.9080, -4.5334, 1.2264), (-3.9360, -0.4383, -0.9834),$$
$$(3.2182, 4.2961, 2.1125).$$}


\vspace*{-2ex}
\subsection{Empirically Determined Bounds}
\label{ssec:empstud}

Figure~\ref{fig:ta0} depicts the stick knot $4_1$, successively followed by the B\'ezier curves of the unknot from the $0$th collinear insertion and of $4_1$ from the $4$th collinear insertion, as previously proven \cite{LiPeMaKoJo15}. Figure~\ref{fig:itpro3} shows Iterations 1 - 3 of collinear insertion under projections that provide visual verification that each of the corresponding B\'ezier curves is the unknot.
\begin{figure}[htb]
\centering
 \subfigure[Stick Knot $4_1$]
    {
   \includegraphics[height=3cm]{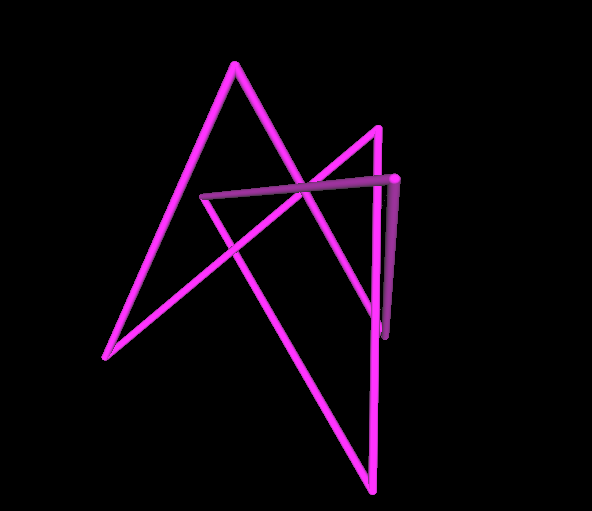}
   \label{fig:stick}
    }
 \subfigure[0th Iteration, $0_1$]
    {
   \includegraphics[height=3cm]{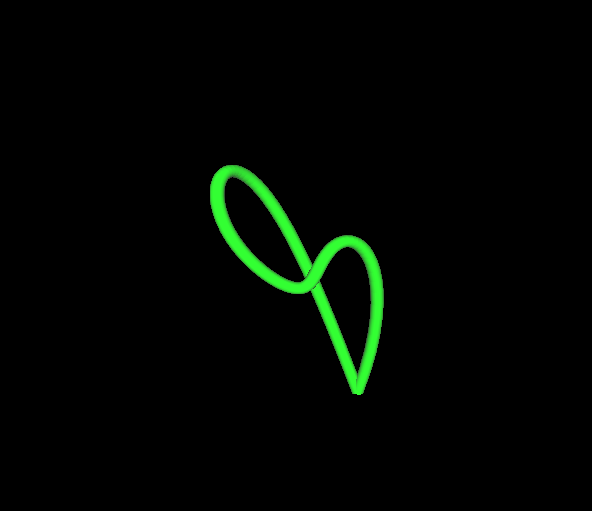}
    \label{fig:onlyunknot}
    }
 \subfigure[4th Iteration, $4_1$]
    {
   \includegraphics[height=3cm]{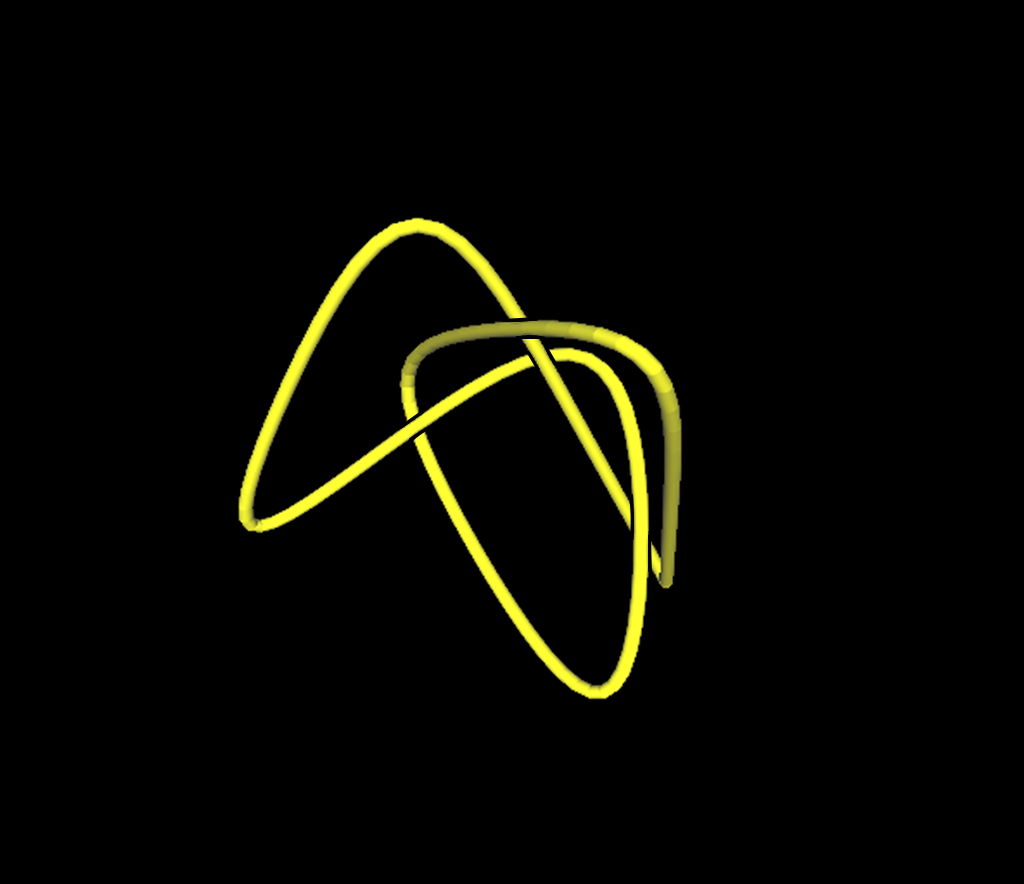}
   \label{fig:isok}
    }
    \vspace{-1ex}
\caption{Stick Knot, 0th \& 4th Collinear Insertions.} 
\label{fig:ta0}
\end{figure}
\vspace{-1ex}

\begin{figure}[htb]
\centering
 \subfigure[1st Iteration]
    {
   \includegraphics[height=3cm]{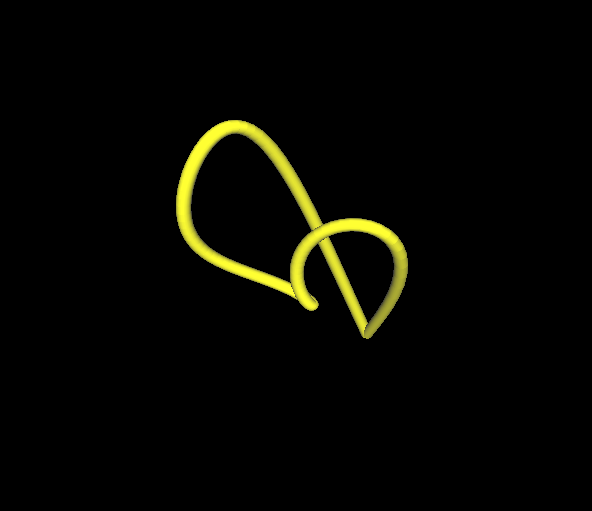}
   \label{fig:pro1}
    }
 \subfigure[2nd Iteration]
    {
   \includegraphics[height=3cm]{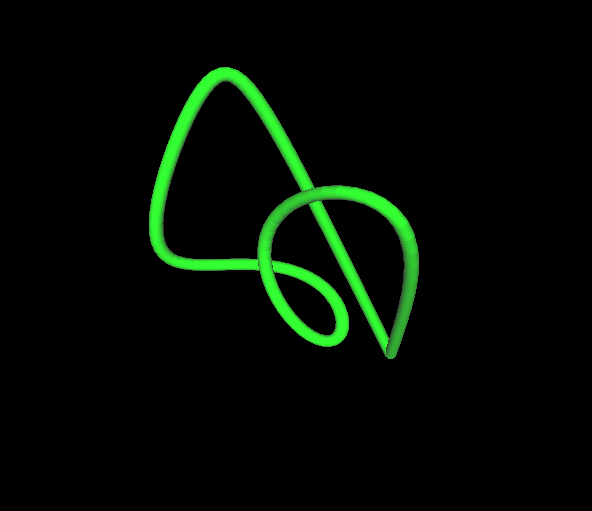}
    \label{fig:pro2}
    }
 \subfigure[3rd Iteration]
    {
   \includegraphics[height=3cm]{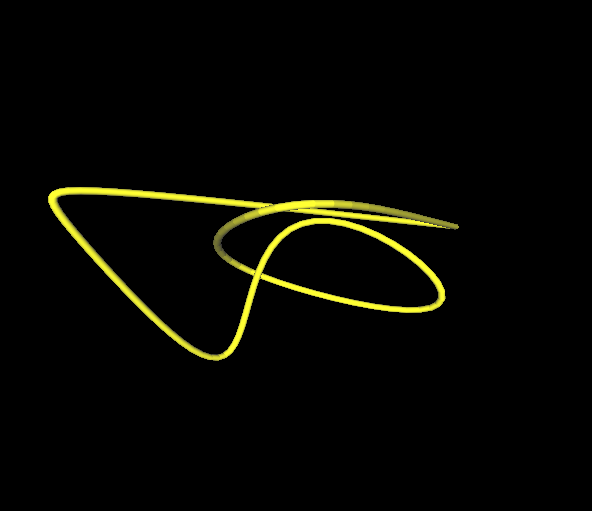}
   \label{fig:pro3}
    }
    \vspace{-1ex}
\caption{Projections of Three Iterations of Collinear Insertion.} 
\label{fig:itpro3}
\end{figure}
\vspace{-1ex}

\subsection{Theoretically Determined Bounds}
\label{ssec:theobounds}

To apply Theorem~\ref{thm:delta-theta-iso} note that $\mathcal{P}$ has finite total curvature.

The following narrative to compute $\delta$ is adapted from the source \cite{DenneSullivan2008} for the convenience of the reader.  Select a finite number of points $p_j$ from $\mathcal{P}$ (including all its control points) such that these points divide $\mathcal{P}$ into PL arcs $\alpha_k$, each of total curvature less than $\pi/8$. 
Let $\epsilon = 1.0$

Let $r_1$ be the minimum distance between any two arcs $a_k$ which are not incident
to a common $p_j$ (or the minimum distance between points $p_j$, if this is smaller). 
Let \[r_2 = \textnormal{min}(r_1/2, \epsilon/2).\]

Consider disjoint open balls $B_j$ of radius $r_2$ centered at the $p_j$ .  Note that
$\mathcal{P} \setminus \bigcup B_j$ is a compact union of disjoint arcs $\beta_k \subset \alpha_k$. Let $r_3$ be the minimum distance between any two of these arcs $\beta_k$

Let $r_4 = r_3/6$ and let $\delta = r_4/3.$

The values for $\mathcal{P}$ are summarized in Table~\ref{table:cdelta}.

\begin{table}[ht]
\begin{center}
\begin{tabular} {|l|r|}
\hline 
\multicolumn{1}{|l|} {\bf Variable} &
\multicolumn{1}{|r|} {\bf Value} \\ 
\hline
$\epsilon$ & $1.0000$  \\
\hline
$r_1$ & $0.2576$  \\
\hline
$r_2$ & $0.1288$  \\
\hline
$r_3$ & $0.0576$ \\  
\hline
$r_4$ & $0.0096$ \\ 
\hline
$\delta$ & $0.0032$ \\ 
\hline
\end{tabular}
\end{center}
\caption{Computing $\delta$.}
\label{table:cdelta}
\end{table}

The values for $m_1$ and $m_2$ discussed at the beginning of this section can now be calculated, using $n = 7, \lambda = 9$.

\pagebreak

To obtain $m_1$, use Inequality~\ref{ineq:drate} and let $\delta = (n/(4 \sqrt{n \ast 2^j + 1})) \ast \| \Delta_{2}P \|_{1,M}$.   Let $ \Omega_1 = \| \Delta_{2}P \|_{1,M} = 70.8$, so that
\[ m_1  = \lceil \textnormal{log}_2 ( (7 \Omega_1^2)/(16 \delta^2) - (1/7) ) \rceil = 28. \]

There are two constraints on $m_2$ in Lemma~\ref{lem:maxangle}, with the first being relative to the shortest tangent vector and the second in terms of the angular bound of $\pi/8$.  The corresponding number of iterations to meet those constraints will be denoted by $m_{2T}$ and $m_{2A}$, respectively, with $m_2 = \max\{m_{2T}, m_{2A}\}.$

First, $m_{2T}$ is computed following Lemma~\ref{lem:maxangle} to have $\| a\| < n \lambda$, with
$\|a \|$ set equal to the right hand side (RHS) of Inequality\ref{ineq:hod-rate} and $\Omega_2 = \| \Delta_{2}H \|_{1,M} = 779.0$ to get
\[ (n/ (2 \ast \sqrt{n \ast 2^j)} \ast 2^{-(j - 1)} \ast \Omega_2 < n  \lambda, \]
and
\[ (\Omega_2/\lambda)^2 < n 2^{3j} + 2^{2j}, \]
to yield $m_{2T} = 4.$

For $m_{2A},$ again, set $\|a\| $  equal to the RHS of Inequality\ref{ineq:hod-rate} and use $\arcsin(\|a\|/(n\lambda)) = \pi/8$ to get $m_{2A} = 5$.  It is then clear, that $m_2 = 5$ and $M = 28.$

The difference between the theoretically determined sufficient condition of 28 iterations to achieve isotopic equivalence versus the visual inspection that 4 iterations suffice present opportunities to explore tightness criteria.  These future investigations will be enabled by previously developed knot visualization software \cite{TJPweb}.  It is of interest to note that the empirically determined optimal value of 4 iterations for the example is exactly met by the tangency condition, with $m_{2T} = 4$, and is nearly met by the angular condition, with $m_{2A} = 5$.  This directs initial attention to considering why the distance criteria yield such a larger number of 28, while also appreciating that the tangency and angular conditions for this example might just be particularly well-behaved.

\section{Conclusion and Future Work} 
For synchronous visualizations of writhing molecules, a cautionary example was previously presented of different knot types between a polynomial curve and its rendering.  Further study of the isotopy constraints on that example prompted these generalizations for a sequence B\'ezier curves converging  to a given stick knot.  This broader theory will further inform the design of visualization software for molecular simulations.  Beyond that application, this work lays the foundation for mathematical visualization software for the experimental investigation of important theoretical relationships between stick and smooth knots, where much remains to be discovered.  A particular future emphasis will be upon whether isotopic approximations can occur earlier in the converging sequence than given by the sufficient \emph{a prior} bounds presented here.  Any discovery of more aggressive bounds will likely rely upon computational experiments to discover criteria for tight bounds.

\pagebreak

\section{Appendix: Supportive Combinatorial Relations}
\label{sec:suppcombo}

Let  $\N$ denote the natural numbers $\{1, 2, 3, \ldots \}$.  For $k \in \N$, consider the central binomial coefficient \cite{matho}, 
\[ \left( \begin{array}{c} 2k \\ k \end{array} \right).\] 

\begin{lemma} For $k \in \N$,
\[ \left( \begin{array}{c} 2k \\ k \end{array} \right) \leq \frac{4^k}{\sqrt{2k + 1}}. \]
\label{lem:erd}
\end{lemma}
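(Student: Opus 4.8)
The plan is to prove the bound by induction on $k$, exploiting the simple multiplicative recursion satisfied by consecutive central binomial coefficients. For the base case $k = 1$ one checks directly that $\binom{2}{1} = 2 \leq 4/\sqrt{3}$, since $4/\sqrt{3} \approx 2.31$.

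For the inductive step I would first record the elementary identity obtained by expanding factorials,
\[ \binom{2k+2}{k+1} = \binom{2k}{k} \cdot \frac{(2k+1)(2k+2)}{(k+1)^2} = \binom{2k}{k} \cdot \frac{2(2k+1)}{k+1}. \]
Applying the inductive hypothesis $\binom{2k}{k} \leq 4^k/\sqrt{2k+1}$ then yields
\[ \binom{2k+2}{k+1} \leq \frac{4^k}{\sqrt{2k+1}} \cdot \frac{2(2k+1)}{k+1} = \frac{2 \cdot 4^k \sqrt{2k+1}}{k+1}. \]
It therefore suffices to show that this right-hand side is at most $4^{k+1}/\sqrt{2k+3}$, which after cancelling the common factor $4^k$ and rearranging reduces to the single claim $\sqrt{(2k+1)(2k+3)} \leq 2k+2$.

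This last inequality is precisely the assertion that the geometric mean of $2k+1$ and $2k+3$ does not exceed their arithmetic mean $2k+2$, so it holds by AM--GM; equivalently, squaring both sides gives $(2k+1)(2k+3) = (2k+2)^2 - 1 \leq (2k+2)^2$, which closes the induction. I do not anticipate a genuine obstacle here, as the argument is entirely elementary. The only point requiring care is the bookkeeping in the inductive step: one must verify that the factor $\frac{2(2k+1)}{k+1}$ combines correctly with the change in the $\sqrt{2k+1}$ denominator so that everything collapses to the trivially true statement $(2k+2)^2 - 1 \leq (2k+2)^2$.
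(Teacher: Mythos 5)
Your induction argument is correct and complete: the base case $\binom{2}{1}=2\le 4/\sqrt{3}$ holds, the ratio identity $\binom{2k+2}{k+1}=\binom{2k}{k}\cdot\frac{2(2k+1)}{k+1}$ is right, and the inductive step reduces, exactly as you say, to $\sqrt{(2k+1)(2k+3)}\le 2k+2$, i.e.\ to $(2k+2)^2-1\le(2k+2)^2$. However, your route differs from the paper's in an essential way: the paper gives no argument at all for this lemma --- its entire ``proof'' is a one-sentence attribution of the key idea, informally, to Erd\H{o}s, with a citation to an online source. The classical Erd\H{o}s-style argument being referenced there is not an induction but a telescoping product: writing $\binom{2k}{k}/4^k=\prod_{j=1}^{k}\frac{2j-1}{2j}$ and squaring, one uses $\frac{(2j-1)^2}{(2j)^2}<\frac{2j-1}{2j+1}$ (equivalently $(2j)^2-1<(2j)^2$) so that the squared product telescopes to $\frac{1}{2k+1}$, giving the bound. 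Your induction rests on precisely the same elementary fact --- a product of two integers at distance two is one less than the square of the integer between them --- but packages it as an inductive step rather than a telescoping cancellation. What your version buys is self-containedness: it turns a citation into an actual proof usable inside the paper, at the cost of being slightly less slick than the telescoping identity, which yields the whole inequality in one line once the product representation is written down. Either way the content is the same, and your write-up has no gaps.
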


\begin{proof} The key to the proof has informally been attributed to P. Erdos \cite{matho} . \hspace*{5ex} $\Box$
\end{proof}

\vspace{2ex}

The following notation has previously appeared   \cite{Nairn-Peters-Lutterkort1999} and is central to the convergence results.   Note that presenting the definition purely for even arguments is sufficient here, since the colliear insertion process always doubles the degree of the previous B\'ezier curve.  For $k \in N$, let 
\[N_1(2k) = \left( \begin{array}{c} 2k \\ k \end{array} \right) \frac{2k}{2^{{2k} + 2}}.\] 

\begin{lemma}
For $k \in N$, 
\[N_1(2k) < \frac{k}{2\sqrt{2k + 1}}.\]
\label{lem:N1ubnd}
\end{lemma}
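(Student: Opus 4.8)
The plan is to obtain the bound by a direct substitution of the definition of $N_1(2k)$ into Lemma~\ref{lem:erd}, followed by an elementary cancellation of the powers of two; the single delicate point is the passage from the non-strict inequality of Lemma~\ref{lem:erd} to the strict inequality claimed here.

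First I would insert the definition $N_1(2k) = \binom{2k}{k}\,\frac{2k}{2^{2k+2}}$ and replace the central binomial coefficient by its upper bound from Lemma~\ref{lem:erd}, giving
\[ N_1(2k) \le \frac{4^k}{\sqrt{2k+1}}\cdot\frac{2k}{2^{2k+2}}. \]
I would then collapse the powers of two. Writing $4^k = 2^{2k}$, the factor $\frac{4^k}{2^{2k+2}}$ equals $\frac14$ for every $k$, so the right-hand side simplifies to $\frac{1}{\sqrt{2k+1}}\cdot\frac{2k}{4} = \frac{k}{2\sqrt{2k+1}}$. This already yields $N_1(2k) \le \frac{k}{2\sqrt{2k+1}}$, and the whole computation is routine.

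The step I expect to be the main obstacle is upgrading this $\le$ to the strict $<$ of the statement, since Lemma~\ref{lem:erd} is phrased non-strictly. The cleanest route, which avoids reopening the proof of Lemma~\ref{lem:erd}, is to show that equality cannot occur: the left side $\binom{2k}{k}$ is a positive integer, whereas the right side $\frac{4^k}{\sqrt{2k+1}}$ is never an integer for $k \in \N$. Indeed, if $2k+1$ is not a perfect square then $\sqrt{2k+1}$ is irrational and so is the quotient; and if $2k+1 = m^2$ then $m$ is odd with $m \ge 3$ (as $k \ge 1$), so $m$ has an odd prime factor that cannot divide $4^k = 2^{2k}$, whence $\frac{4^k}{m}$ is not an integer. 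Thus $\binom{2k}{k} < \frac{4^k}{\sqrt{2k+1}}$ strictly, and since the subsequent simplification introduces no further equality this propagates to $N_1(2k) < \frac{k}{2\sqrt{2k+1}}$. Alternatively, one could note that the Erd\H{o}s-type induction behind Lemma~\ref{lem:erd} is already strict at each finite stage --- the base case $k=1$ reads $2 < 4/\sqrt{3}$, and the inductive step rests on $(2k+1)(2k+3) = (2k+2)^2 - 1 < (2k+2)^2$ --- which gives the same strict conclusion.
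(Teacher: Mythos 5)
Your proposal is correct and follows essentially the same route as the paper: substitute the bound of Lemma~\ref{lem:erd} into the definition of $N_1(2k)$ and cancel the powers of two, using $2^{2k+2}=4^{k+1}$. If anything, your handling of strictness is more careful than the paper's own proof, which directly writes the strict inequality $N_1(2k) < \frac{4^k}{\sqrt{2k+1}}\cdot\frac{2k}{4^{k+1}}$ while invoking a lemma stated only with $\leq$; your observation that $\frac{4^k}{\sqrt{2k+1}}$ can never equal the integer $\binom{2k}{k}$ supplies exactly the justification that step silently assumes.
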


\begin{proof}  
The short proof \cite{LiPeMaKoJo15} is repeated here for the convenience of the reader.
Invoke Lemma~\ref{lem:erd} on
\[ N_1(2k) = \left( \begin{array}{c} 2k \\ k \end{array} \right) \frac{2k}{2^{{2k} + 2}} < 
\frac{4^k}{\sqrt{2k + 1}} \frac{2k}{4^{k + 1}} = \frac{k}{2 \sqrt{2k + 1}}.  \hspace*{5ex} \Box  \] 
\end{proof}

\pagebreak


\bibliographystyle{elsarticle-harv} 
\bibliography{ji-tjp-biblio2}





\end{document}